\newcommand{\numberset}{\mathbb} 
\newcommand{\R}{\numberset{R}}
\newcommand{\varphie}{\varphi_{\varepsilon, i}}
\newcommand{\thetae}{\theta_{\varepsilon}}
\numberwithin{equation}{section}
\newtheorem{thm}{\indent\bf {Theorem}}[section]
\theoremstyle{definition}
\newtheorem{rmk}[thm]{\indent \textsc {Remark}}
\begin{document}

\def\author@andify{%
   \nxandlist {\unskip ,\penalty-1 \space\ignorespaces}%
     {\unskip {} }%
     {\unskip ,\penalty-2 \space }%
}
\title[Multipolar potentials and wHi]{Multipolar potentials and weighted Hardy inequalities}

\author[A. Canale]{Anna Canale}
\address{Dipartimento di Matematica, 
Università degli Studi di Salerno, Via Giovanni Paolo II, 132, 84084 Fisciano
(Sa), Italy.
}

\thanks{{\it Key words and phrases}. Improved Hardy inequality, weight functions, 
singular potentials, Kolmogorov operators.\\
The author is member of the Gruppo Nazionale per l'Analisi Matematica, 
la Probabilit\'a e le loro Applicazioni 
(GNAMPA) of the Istituto Nazionale di Alta Matematica (INdAM)}

\subjclass[2010]{35K15, 35K65, 35B25, 34G10, 47D03}

\maketitle

\bigskip

\begin{abstract}

In this paper we state the following weighted Hardy type inequality for any functions $\varphi$ in a weighted Sobolev space 
and for weight functions $\mu$ of a quite general type
\begin{equation*}
c_{N,\mu}
\int_{\R^N}V\,\varphi^2\mu(x)dx\le
\int_{\R^N}|\nabla \varphi|^2\mu(x)dx
+C_\mu \int_{\R^N}W \varphi^2\mu(x)dx,
\end{equation*} 
where $V$ is a multipolar potential and $W$ is a bounded function from above depending on $\mu$. The method to get the result is based on the introduction of a suitable vector value function and on an integral identity that we state in the paper. We prove that the constant $c_{N,\mu}$ in the estimate is optimal by building a suitable sequence of functions.

\end{abstract}

\maketitle

\vfill\eject

\section{Introduction}
\bigskip

The paper fits into the context of multipolar weighted Hardy type inequalities in $\R^N$.

The main motivation to investigate weighted Hardy inequalities is the application of the estimates in the study of Kolmogorov operators

\begin{equation}\label{L Kolmogorov}
Lu=\Delta u+\frac{\nabla \mu}{\mu}\cdot\nabla u,
\end{equation}
defined on smooth functions, $\mu>0$ is a probability density on $\R^N$, perturbed by singular potentials and of the related evolution problems
$$
(P)\quad \left\{\begin{array}{ll}
\partial_tu(x,t)=Lu(x,t)+V(x)u(x,t),\quad \,x\in {\mathbb R}^N, t>0,\\
u(\cdot ,0)=u_0\geq 0\in L^2(\R^N, \mu(x)dx).
\end{array}
\right. $$
The classical Hardy inequality was introduced in 1920th \cite{Hardy20} in the one dimensional case (see also 
\cite{Hardy25, HLP}). For historical reviews see, for example, \cite{D, KMP, KO}.

The well known version in $L^2(\R^N)$, $N\ge 3$, of the inequality is 
\begin{equation}\label{iHi introd}
c_{o}(N)\int_{\R^N}\frac{\varphi^2}{|x|^2}dx
\le \int_{\R^N}|\nabla \varphi|^2 dx
\end{equation}
for any functions $\varphi\in H^1(\R^N)$, where $c_{o}(N)=\bigl(\frac{N-2}{2}\bigr)^2$ is the optimal constant. 
The inequality (\ref{iHi introd}) does not hold if $c_{o}(N)$ is replaced by any $c>c_{o}(N)$
(see, e.g., \cite{Garcia Peral, M, GKombe}).

In literature there exist reference papers 
in the case of Schr\"odinger operators with singular potentials 
$V(x)=\frac{c}{|x|^2}$, $c>0$. These potentials are interesting for the criticality: they lie at a borderline case where
standard theories such as the strong maximum principle and Gaussian bounds in \cite{Aronson} fail.  It does not belong to the Kato's class, then it cannot be regarded as a lower order perturbation term.

The operator $\Delta+\frac{c}{|x|^{2}}$ has the same homogeneity as the Laplacian.
In 1984 P. Baras and J. A. Goldstein in \cite{BarasGoldstein}
showed a remarkable result:  the evolution problem $(P)$ with $L=\Delta$ and $V(x)=\frac{c}{|x|^{2}}$ admits a unique positive solution
if $c\leq c_o=\left( \frac{N-2}{2} \right)^{2}$ and no positive solutions in the sense of distributions exist if $c>c_o$.
When it exists, the solution is
exponentially bounded, on the contrary, if $c>c_o$, there is the so-called instantaneous blow-up phenomenon.

An analogous result has been obtained by Cabr\'e and Martel in \cite{CabreMartel} for more general potentials $0\le V\in L_{loc}^1({\mathbb R}^N)$ and with a different approach. 
They showed that the boundedness  of
 $\lambda_1(\Delta+V)$, the bottom of the spectrum of the operator $-(L+\Delta)$,

\begin{equation*}
\lambda_1(L+V):=\inf_{\varphi \in H^1_\mu\setminus \{0\}}
\left(\frac{\int_{{\mathbb R}^N}|\nabla \varphi |^2\,dx
-\int_{{\mathbb R}^N}V\varphi^2\,dx}{\int_{{\mathbb R}^N}\varphi^2\,dx}
\right),
\end{equation*}
is a necessary 
and sufficient condition for the existence of positive exponentially bounded in time
solutions to the associated initial value problem. 

These results have been extended to Kolmogorov operators perturbed by inverse square potential $V=\frac{c}{|x|^2}$ in \cite{GGR, GGR2}.
Later, for more general drift term, similar result can be found in \cite{CGRT, CPT1}. 

If the potentials are of a more general type see 
\cite{CA Hardy type, CA improved, CA improved 2}. 

The case of the Schr\"odinger operator with multipolar inverse square potentials has been investigated in literature.

In particular, for the operator
$$
\mathcal{L}=-\Delta-\sum_{i=1}^n\frac{c_i}{|x-a_i|^2},
$$
$n\ge2$, $c_i\in \R$, for any $i\in \{1,\dots, n\}$, V. Felli, E. M. Marchini and S. Terracini in
\cite{FelliMarchiniTerracini} proved that the associated quadratic form
$$
Q(\varphi):=\int_{\R^N}|\nabla \varphi |^2\,dx
-\sum_{i=1}^n c_i\int_{{\mathbb R}^N}\frac{\varphi^2}{|x-a_i|^2}\,dx
$$
is positive if $\sum_{i=1}^nc_i^+<\frac{(N-2)^2}{4}$, $c_i^+=\max\{c_i,0\}$, 
conversely if
$\sum_{i=1}^nc_i^+>\frac{(N-2)^2}{4}$ there exists a configuration of poles such that $Q$ is not positive.
Later R. Bosi, J. Dolbeaut and M. J. Esteban in \cite{BDE} proved that 
for any $c\in\left(0,\frac{(N-2)^2}{4}\right]$ there exists 
a positive constant $K$ such that the multipolar Hardy inequality
\begin{equation*}
	c\int_{{\R}^N}\sum_{i=1}^n\frac{\varphi^2 }{|x-a_i|^2}\, dx\le 
	\int_{{\R}^N} |\nabla\varphi|^2 \, dx\\
	+ K \int_{\R^N}\varphi^2 \, dx 
\end{equation*}
holds for any $\varphi \in H^1(\R^N)$.
C. Cazacu and E. Zuazua in \cite{CazacuZuazua}, improving a result stated in 
\cite{BDE}, obtained the inequality with optimal constant

$$
\frac{(N-2)^2}{n^2}\sum_{\substack{i,j=1\\ i< j}}^{n}
	\int_{\R^N}\frac{|a_i-a_j|^2}{|x-a_i|^2|x-a_j|^2}\varphi^2\,dx
	\le\int_{\R^N}|\nabla \varphi|^2\,dx,
$$
for any $\varphi\in H^1(\R^N)$.
 
For Ornstein-Uhlenbeck type operators 
$$
Lu=\Delta u - \sum_{i=1}^{n}A(x-a_i)\cdot \nabla u,
$$
perturbed by multipolar
inverse square potentials 
\begin{equation}\label{V CPT2}
V(x)=\sum_{i=1}^n \frac{c}{|x-a_i|^2},\quad c>0, \quad a_1\dots,a_n\in \R^N,
\end{equation}
weighted multipolar Hardy inequalities with optimal constant
and related existence and nonexistence results were stated in \cite{CP}, 
with $A$ a positive definite real Hermitian $N\times N$ matrix, $a_i\in \R^N$, $i\in \{1,\dots , n\}$. 
In such a case, the invariant measure for these operators is the Gaussian measure
$\mu_A (x) dx =\kappa e^{-\frac{1}{2}\sum_{i=1}^{n}\left\langle A(x-a_i), x-a_i\right\rangle }dx$,
with a normalization constant $\kappa$.  In the paper some results in \cite{GGR} have been extended to the multipolar case.  We remark that to get weighted Hardy inequalities involving multipolar potentials $V$ as in (\ref{V CPT2}), we need to overcome even the difficulties due to the mutual interaction among the poles.

In \cite{CPT2} these results have been extended to Kolmogorov operators with a more general drift term using different techniques and following Cabr\'e-Martel's approach to get existence and nonexistence results. 

In this paper we extend the inequality stated in \cite{CazacuZuazua} to the weighted case  improving a result stated in \cite {CPT2}. The potential that we consider is

\begin{equation}\label{V}
V(x)=\frac{1}{2}\sum_{\substack{i,j=1\\ i\ne j}}^{n}
\frac{|a_i-a_j|^2}{|x-a_i|^2|x-a_j|^2}.
\end{equation}
In particular we state the following inequality

\begin{equation}\label{ineq intro}
c_{N, \mu}\int_{\R^N} V \varphi^2 \, d\mu \le
\int_{\R^N}|\nabla\varphi|^2 \, d\mu+\int_{\R^N} W\varphi^2 \, d\mu
\end{equation}
for any $\varphi\in H^1(\R^N, \mu(x)dx)$, where $W$ is a bounded function from above which depends on the weight.  


The proof is based on the introduction of a suitable vector value function and on an integral identity that we state. 

As applications to PDE, we get the existence of positive solutions to (P), with $V$  as in (\ref{V}), following Cabr\'e-Martel's approach as in \cite{CPT2}. 
This because the Hardy inequality is related to the estimate of the bottom of the spectrum 
$\lambda_1(L+V)$. In \cite{CPT2} one can found existence and nonexistence results in the multipolar case when $V\in L^1_{loc}(\R^N)$, so the existence results can be extended to potential $V$ in (\ref{V}). 

The constant $c_{N, \mu}$ in (\ref{ineq intro}) is optimal. To prove the optimality 
we reason as in \cite{CazacuZuazua}, but the sequence involved to prove the reverse inequality is different. 

The paper is organized as follows.

Section 1 is devoted to the introduction of the weight functions and to the weighted Hardy type inequality. In Section 2 we state the optimality of the constant in the estimate.


\bigskip\bigskip

\section{Weighted inequality}

\bigskip

Let $\mu\ge 0$ be a weight function on $\R^N$. We define the weighted Sobolev space 
$H^1_\mu=H^1(\R^N, \mu(x)dx)$
as the space of functions in $L^2_\mu:=L^2(\R^N, \mu(x)dx)$ whose weak derivatives belong to
$L_\mu^2$.

The conditions on $\mu$ which we need are the following.

\begin{itemize}
\item[$H_1)$] 
\begin{itemize}
\item[$i)$] $\quad \sqrt{\mu}\in H^1_{loc}(\R^N)$;
\item[$ii)$]  $\quad \mu^{-1}\in L_{loc}^1(\R^N)$;
\end{itemize}
\item [$H_2)$] there exists constants $C_\mu, K_\mu\in \R$, $k_\mu>2-N$, such that 
it holds
\begin{equation}\label{cond W}
W:=-\sum_{i=1}^n\frac{\beta}{|x-a_i|^2}\left[(x-a_i)\cdot
\frac{\nabla\mu}{\mu}-K_\mu\right]\le C_\mu.
\end{equation}
\end{itemize}
The class of weight functions $\mu$ satisfying conditions $H_1)$ and $H_2)$ was considered in \cite{CPT2} to get a weighted multipolar Hardy inequality involving inverse square potentials of multipolar type (see also \cite{CPT1} for a similar conditions in the case of a single pole).

 Under the hypotheses $i)$ and $ii)$ in $H_1)$ the 
space $C_c^{\infty}(\R^N)$ is dense in $H_{\mu}^1$ (see e.g. \cite{T}).  
So we can regard
$H_{\mu}^1$ as the completion of $C_c^{\infty}(\R^N)$ with respect to the Sobolev norm
$$
\|\cdot\|_{H^1_\mu}^2 := \|\cdot\|_{L^2_\mu}^2 + \|\nabla \cdot\|_{L^2_\mu}^2.
$$
We need the density result in the proof of Theorem \ref{Thm inequality} below to get the result for any function in $H^1_\mu$. 
Actually, with the additional assumption

\begin{itemize}
\item[$H_3)$] 
\begin{equation*}
\lim_{\delta\to 0}\frac{1}{\delta^2}\int_{B(a_i,\delta)}\mu(x)dx=0 
\qquad \forall\, i\in\{1, \dots , n\},
\end{equation*} 
\end{itemize}
then $C_c^{\infty}(\R^N\setminus\{a_1, \dots, a_n\})$ is dense in $H^{1}_{\mu}$
(see \cite{CPT2}). 

A class of weight functions satisfying hypotheses $H_2)$ is the following
\begin{equation}\label{def mu}
\mu(x)=\prod_{i=1}^n\frac{1}{|x-a_1|^\gamma}
e^{-\delta\sum_{j=1}^n|x-a_j|^m},\qquad \delta\ge 0,\quad m\le 2, 
\end{equation}
for suitable values of $\gamma<N-2$ (see \cite{CPT2} for details). 

Let
\begin{equation}\label{def V}
V(x)=\frac{1}{2}\sum_{\substack{i,j=1\\ i\ne j}}^{n}
\frac{|a_i-a_j|^2}{|x-a_i|^2|x-a_j|^2}.
\end{equation}
We remark that when $x$ tends to the pole $a_i$ we get $V\sim \frac{n-1}{|x-a_j|^2}$.

The next result states a weighted Hardy type inequality involving the potential $V$ defined in (\ref{def V}). In \cite{CazacuZuazua} a similar inequality was obtained in the case of Lebesgue measure in a different way and under different hypotheses. 

\begin{thm}\label{Thm inequality}
Under conditions $H_1)$ and $H_2)$ the following inequality holds

\begin{equation}\label {ineq 1}
c_{N, \mu}\int_{\R^N} V \varphi^2 \, d\mu \le
\int_{\R^N}|\nabla\varphi|^2 \, d\mu+\int_{\R^N} W\varphi^2 \, d\mu
\end{equation}
for any $ \varphi\in H^1_\mu$,  where $\beta=\frac{N+K_\mu-2}{n}$ and 
$c_{N, \mu}=\frac{(N+K_\mu-2)^2}{n^2}$.

As a consequence we get

\begin{equation}\label {ineq 2}
c_{N, \mu}\int_{\R^N} V \varphi^2 \, d\mu \le
\int_{\R^N}|\nabla\varphi|^2 \, d\mu+C_\mu\int_{\R^N} \varphi^2 \, d\mu.
\end{equation}
\end{thm}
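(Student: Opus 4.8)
The plan is to prove (\ref{ineq 1}) by the vector field (``integral identity'') method, which is the engine behind the result announced in the abstract. First I would introduce the auxiliary scalar $f(x)=\prod_{i=1}^n|x-a_i|^{-\beta}$ and the associated vector field
\[
F:=\frac{\nabla f}{f}=-\beta\sum_{i=1}^n\frac{x-a_i}{|x-a_i|^2},
\]
which is precisely the ``suitable vector value function'' mentioned earlier. The proof then rests on the weighted integral identity obtained by integrating $\operatorname{div}(\varphi^2F\mu)=0$ over $\R^N$ for $\varphi\in C_c^\infty(\R^N\setminus\{a_1,\dots,a_n\})$. Expanding the divergence as $\operatorname{div}(F\mu)=\mu(\operatorname{div}F+\frac{\nabla\mu}{\mu}\cdot F)$ and rearranging gives
\[
-\int_{\R^N}\varphi^2\Bigl(\operatorname{div}F+\frac{\nabla\mu}{\mu}\cdot F\Bigr)\,d\mu=2\int_{\R^N}\varphi\,\nabla\varphi\cdot F\,d\mu,
\]
and bounding the right-hand side by Young's inequality $2\varphi\,\nabla\varphi\cdot F\le|\nabla\varphi|^2+\varphi^2|F|^2$ yields
\[
\int_{\R^N}\Phi\,\varphi^2\,d\mu\le\int_{\R^N}|\nabla\varphi|^2\,d\mu,\qquad \Phi:=-\operatorname{div}F-\frac{\nabla\mu}{\mu}\cdot F-|F|^2.
\]

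Second, I would compute $\Phi$ term by term. From $\operatorname{div}\frac{x-a_i}{|x-a_i|^2}=\frac{N-2}{|x-a_i|^2}$ one gets $-\operatorname{div}F=\beta(N-2)\sum_i|x-a_i|^{-2}$; the drift term reproduces the weight contribution directly, $-\frac{\nabla\mu}{\mu}\cdot F=-W+\beta K_\mu\sum_i|x-a_i|^{-2}$, by the very definition of $W$ in (\ref{cond W}). The crucial computation is $|F|^2$: expanding the square one meets the off-diagonal inner products $(x-a_i)\cdot(x-a_j)$, and here the key algebraic identity
\[
2(x-a_i)\cdot(x-a_j)=|x-a_i|^2+|x-a_j|^2-|a_i-a_j|^2
\]
(the law of cosines) is what converts the cross terms into the multipolar potential $V$ of (\ref{def V}); after collecting the symmetric sums one finds $|F|^2=\beta^2 n\sum_i|x-a_i|^{-2}-\beta^2V$.

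Third, summing the three pieces, the coefficient of the singular sum $\sum_i|x-a_i|^{-2}$ equals $\beta\bigl[(N-2)+K_\mu-\beta n\bigr]$, which vanishes exactly because of the calibration $\beta=\frac{N+K_\mu-2}{n}$. Thus every inverse-square singularity cancels and there remains $\Phi=\beta^2V-W=c_{N,\mu}V-W$, which is precisely (\ref{ineq 1}) for test functions. The consequence (\ref{ineq 2}) then follows immediately from the upper bound $W\le C_\mu$ granted by $H_2)$.

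I expect the main obstacle to lie not in the algebra but in the passage from smooth functions to a general $\varphi\in H^1_\mu$. The integral identity requires the boundary contributions near each pole $a_i$ to vanish, which is automatic for $\varphi\in C_c^\infty(\R^N\setminus\{a_1,\dots,a_n\})$ since $F$ is bounded on such supports; the delicate point is the density extension. Because the left-hand side carries the singular potential $V$, I would first establish the inequality on $C_c^\infty(\R^N\setminus\{a_1,\dots,a_n\})$, then invoke the density of this space (or of $C_c^\infty(\R^N)$) in $H^1_\mu$ guaranteed under $H_1)$, passing to the limit on the right-hand side by continuity in the $H^1_\mu$ norm while controlling the left-hand side from below by Fatou's lemma.
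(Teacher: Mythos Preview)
Your approach is essentially the paper's: the same auxiliary function $f=\prod_i|x-a_i|^{-\beta}$, the same vector field $F=\nabla f/f$, the same law-of-cosines computation turning the cross terms into $V$, and the same calibration $\beta=(N+K_\mu-2)/n$ killing the residual inverse-square sum. Two differences are worth noting.

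First, where you apply Young's inequality $2\varphi\,\nabla\varphi\cdot F\le|\nabla\varphi|^2+\varphi^2|F|^2$, the paper instead uses the exact identity $|\nabla\varphi|^2+\varphi^2|F|^2-2\varphi\,\nabla\varphi\cdot F=f^2\bigl|\nabla(\varphi/f)\bigr|^2$ (a ground-state substitution), arriving at the equality
\[
\int_{\R^N}|\nabla\varphi|^2\,d\mu=\int_{\R^N}f^2\Bigl|\nabla\frac{\varphi}{f}\Bigr|^2d\mu+c_{N,\mu}\int_{\R^N}V\varphi^2\,d\mu-\int_{\R^N}W\varphi^2\,d\mu,
\]
and then dropping the nonnegative remainder. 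For the present inequality this is equivalent to your Young step, but the exact identity with remainder is what the paper reuses in Section~3 to prove optimality, so your shortcut would have to be revisited there.

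Second, the density issue you flag is slightly mis-aimed. Under $H_1)$ alone it is $C_c^\infty(\R^N)$, not $C_c^\infty(\R^N\setminus\{a_1,\dots,a_n\})$, that is dense in $H^1_\mu$; the latter density requires the additional hypothesis $H_3)$ (see the paper's remark after the theorem). The paper therefore works directly with $\varphi\in C_c^\infty(\R^N)$ whose support may contain poles, which forces it to justify the divergence-theorem step across the singularities: it checks, via $\sqrt{\mu}\in H^1_{loc}$ and the classical Hardy inequality, that each component of $F\mu$ and of its distributional divergence lies in $L^1_{loc}(\R^N)$. Your plan bypasses this verification; it is a short computation, but it is precisely where hypothesis $H_1)\,i)$ is used.
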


\begin{proof}

It is enough to prove (\ref{ineq 1})
for any $\varphi \in C_{c}^{\infty}(\R^N)$. 
By density argument we extend the result to functions $\varphi \in H^1_\mu$.

We set
$$
f=\prod_{i=1}^n f_i:=\prod_{i=1}^n\frac{1}{|x-a_i|^\beta},
\qquad \beta>0,
$$
and introduce the function
\begin{equation}\label {def F}
F(x)=-\frac{\nabla f}{f}\,\mu=-\frac{\nabla f_i}{f_i}\,\mu:=
\sum_{i=1}^n \beta\, \frac{x-a_i}{|x-a_i|^2} \mu.
\end{equation}
It is useful note that

\begin{equation}\label {delta f su f}
\begin{split}
\frac{\Delta f}{f}&=\sum_{i=1}^n \frac{\Delta f_i}{f_i}+
\sum_{\substack{i,j=1\\ i\ne j}}^{n}\frac{\nabla f_i}{f_i}\frac{\nabla f_j}{f_j}
\\&=
\sum_{i=1}^n\frac{\beta^2-\beta(N-2)}{|x-a_i|^2}+
\beta^2\sum_{\substack{i,j=1\\ i\ne j}}^{n}\frac{(x-a_i)\cdot (x-a_j)}{|x-a_i|^2|x-a_j|^2}
\\&=
\sum_{i=1}^n\frac{n\beta^2-\beta(N-2)}{|x-a_i|^2}-
\frac{\beta^2}{2} \sum_{\substack{i,j=1\\ i\ne j}}^{n} 
\frac{|a_i-a_j|^2}{|x-a_i|^2|x-a_j|^2}
 \\&=
\sum_{i=1}^n\frac{n\beta^2-\beta(N-2)}{|x-a_i|^2}-\beta^2 V
\end{split}
\end{equation}
since
\begin{equation*}
\begin{split}
\sum_{\substack{i,j=1\\ i\ne j}}^{n}\frac{(x-a_i)\cdot (x-a_j)}{|x-a_i|^2|x-a_j|^2}  &=
\sum_{\substack{i,j=1\\ i\ne j}}^{n}\frac{|x|^2-x\cdot a_i-x\cdot a_j+a_i\cdot a_j}
{|x-a_i|^2|x-a_j|^2}
\\&=
\sum_{\substack{i,j=1\\ i\ne j}}^{n}\frac{\frac{|x-a_i|^2}{2}+\frac{|x-a_j|^2}{2}-\frac{|a_i-a_j|^2}{2}}
{|x-a_i|^2|x-a_j|^2}
\\&=
\sum_{\substack{i,j=1\\ i\ne j}}^{n}\frac{1}{2}\left( \frac{1}{|x-a_i|^2}+\frac{1}{|x-a_j|^2}
-\frac{|a_i-a_j|^2}{|x-a_i|^2|x-a_j|^2}\right) 
\\&=
(n-1)\sum_{i=1}^{n}\frac{1}{|x-a_i|^2}
-\frac{1}{2}\sum_{\substack{i,j=1\\ i\ne j}}^{n}\frac{|a_i-a_j|^2}{|x-a_i|^2|x-a_j|^2}.
\end{split}
\end{equation*}
We observe that
$$
V=-\frac{\Delta f}{f}\qquad\hbox{if}\quad \beta=\frac{N-2}{n}.
$$ 
We start from the following integral and use (\ref{def F}) and (\ref{delta f su f}) to obtain
\begin{equation}\label{div F left}
\begin{split}
\int_{\R^N}{\rm div}&F \,\varphi^2 dx =
\int_{\R^N}\left(-\frac{\Delta f}{f}\mu+\left|\frac{\nabla f}{f}\right|^2\mu
-\frac{\nabla f}{f}\cdot \nabla\mu\right)\varphi^2\,dx
\\&=
\int_{\R^N}\Biggl[\sum_{i=1}^n\frac{\beta(N-2)-n\beta^2}{|x-a_i|^2}+\beta^2 V
+\sum_{i=1}^n\frac{n\beta^2}{|x-a_i|^2}
\\&-
\beta^2 V+\beta\sum_{i=1}^n\frac{(x-a_i)}{|x-a_i|^2}\cdot\nabla\mu\Biggr]\varphi^2 dx
\\&=
\int_{\R^N}\sum_{i=1}^n\left[\frac{\beta(N-2)}{|x-a_i|^2} \mu +
\beta\frac{(x-a_i)}{|x-a_i|^2}\cdot\nabla\mu\right]\varphi^2 dx
\end{split}
\end{equation}
by observing that the functions
 $F_j$, $\frac{\partial F_j}{\partial x_j}$, where 
$F_j=\beta \sum_{i=1}^{n}\frac{(x-a_i)_j}{|x-a_i|^2}\mu$,
belong to $\in L_{loc}^1(\R^N)$. Let us see in more detail. By means of
 H\"older's inequality and the 
classical Hardy's inequality, taking in mind hypothesis $i)$ in $H_2)$, 
for any $K$ compact set in $\R^N$ we get 
\begin{equation*}
\begin{split}
\int_K |F_j| \,dx&\le  \beta \sum_{i=1}^{n}\int_K \frac{\mu(x)}{|x-a_i|} \,dx\le
\beta\sum_{i=1}^{n}\left(\int_K \frac{\mu(x)}{|x-a_i|^2}\,dx \right)^{\frac{1}{2}}
\left(\int_K \mu(x)\,dx\right)^{\frac{1}{2}}
\\&\le
\frac{2\beta n} {N-2}\left(\int_K \left|\nabla\sqrt \mu\right|^2\,dx \right)^{\frac{1}{2}}
\left(\int_K  \mu(x) \,dx\right)^{\frac{1}{2}}.
\end{split}
\end{equation*}
For the partial derivative of $F_j$
$$
\frac{\partial F_j}{\partial x_j}(x) =
\sum_{i=1}^{n}\frac{\beta}{|x-a_i|^2}\left\{\left[1-2\frac{(x-a_i)_j^2}{|x-a_i|^2}\right]\mu+(x-a_i)_j\frac{\partial \mu}{\partial x_j}\right\}
$$ 
we obtain the estimate

\begin{equation*}
\begin{split}
\int_K \left|\frac{\partial F_j}{\partial x_j}\right|\,dx &\le
3\beta\sum_{i=1}^{n}\int_K \frac{\mu(x)}{|x-a_i|^2}\,dx+
\beta\sum_{i=1}^{n}\int_K \frac{|\nabla \mu|}{|x-a_i|}\,dx 
\\&\le
\frac{3\beta n} {c_o(N)}\int_K \left|\nabla{\sqrt \mu}\right|^2\,dx+
2\beta\sum_{i=1}^{n}\int_K \frac{\sqrt \mu}{|x-a_i|}|\nabla\sqrt \mu|\,dx
\\& \le
\frac{3\beta n} {c_o(N)}\int_K \left|\nabla{\sqrt \mu}\right|^2\,dx+
2\beta\sum_{i=1}^{n}\left(\int_K \frac{\mu(x)}{|x-a_i|^2}\,dx\right)^{\frac{1}{2}}
\left(\int_K \left|\nabla\sqrt \mu\right|^2\,dx \right)^{\frac{1}{2}}
\end{split}
\end{equation*}
where $c_o(N)= \frac{(N-2)^2}{4}$ is the best constant in Hardy's inequality.
Then we apply again Hardy's inequality on the right-hand side above.

Integrating by parts on the left-hand side in (\ref{div F left}), we obtain

\begin{equation}\label{div F right}
\begin{split}
\int_{\R^N}&{\rm div}F \varphi^2 \,dx=
2\int_{\R^N}\varphi\nabla\varphi\cdot \frac{\nabla f}{f}\,\mu \, dx
\\&  =
-2\int_{\R^N}\left|\nabla\frac{\varphi}{f}\right|^2 f^2 \, d\mu+
\int_{\R^N}|\nabla\varphi|^2 \, d\mu+
\int_{\R^N}\left|\frac{\nabla f}{f}\right|^2 \varphi^2 \, d\mu,
\end{split}
\end{equation}
explained by the fact that
$$
\left|\nabla\frac{\varphi}{f}\right|^2=\frac{|\nabla \varphi|^2}{f^2}+
\left|\frac{\nabla f}{f}\right|^2 \frac{\varphi^2}{f^2}-
2\varphi\nabla\varphi\cdot \frac{\nabla f}{f} \frac{1}{f^2}.
$$
So, putting together (\ref{div F left}) and (\ref{div F right}), we deduce that
\begin{equation}\label{identity 1}
\begin{split}
\int_{\R^N}&|\nabla\varphi|^2 \, d\mu =
\int_{\R^N}\left|\nabla\frac{\varphi}{f}\right|^2 f^2 \, d\mu-
\int_{\R^N}\frac{\Delta f}{f}\varphi^2 \, d\mu
\\&-
\int_{\R^N}\frac{\nabla f}{f}\cdot\frac{\nabla \mu}{\mu}\, d\mu=
\int_{\R^N}\left|\nabla\frac{\varphi}{f}\right|^2 f^2 \, d\mu
\\&+
\int_{\R^N}\sum_{i=1}^{n}\frac{\beta(N+K_\mu-2)-n\beta^2}{|x-a_i|^2}\varphi^2\,d\mu+\beta^2\int_{\R^N}V\varphi^2 \, d\mu
\\&+
\int_{\R^N}\sum_{i=1}^n\frac{\beta}{|x-a_i|^2}\left[(x-a_i)\cdot
\frac{\nabla\mu}{\mu}-K_\mu\right]\varphi^2 \, d\mu.
\end{split}
\end{equation}
For $\beta=\frac{N+K_\mu-2}{n}$ in (\ref{identity 1}), setting
$c_{N, \mu}=\frac{(N+K_\mu-2)^2}{n^2}$, 
we get the integral identity

\begin{equation}\label{identity 2}
\int_{\R^N}|\nabla\varphi|^2 \, d\mu =
\int_{\R^N}\left|\nabla\frac{\varphi}{f}\right|^2 f^2 \, d\mu+
c_{N, \mu}\int_{\R^N} V\, \varphi^2 \, d\mu-
\int_{\R^N} W\, \varphi^2 \, d\mu.
\end{equation}
Identity (\ref{identity 2}) implies that
\begin{equation*}
c_{N, \mu}\int_{\R^N} V\, \varphi^2 \, d\mu\le
\int_{\R^N}|\nabla\varphi|^2 \, d\mu+\int_{\R^N}W\varphi^2 \, d\mu.
\end{equation*}
For the class of functions $\mu$ satisfying hypothesis (\ref{cond W}) in $H_2)$, 
it follows the inequality (\ref{ineq 2}).

\end{proof}

\begin{rmk} 
In the proof of Theorem \ref{Thm inequality} the initial assumption 
$\varphi \in C_{c}^{\infty}(\R^N)$ can be replaced by 
$\varphi \in C_c^{\infty}(\R^N\setminus\{a_1, \dots, a_n\})$ under the additional condition
$H_3)$ in the hypotheses of the Theorem \ref{Thm inequality}.
\end{rmk}

\begin{rmk}
If $\mu=1$ and, then, $K_\mu,C_\mu=0$,  we obtain the estimate in \cite{CazacuZuazua} with constant $c_N=\frac{(N-2)^2}{n^2}$ on the left-hand side in place of $c_{N,\mu}$.
\end{rmk}

\begin{rmk}
We remark that the inequality (\ref{identity 1}) allows us to have

\begin{equation}\label{ineq beta}
\begin{split}
\int_{\R^N}&\sum_{i=1}^{n}\frac{\beta(N+K_\mu-2)-n\beta^2}{|x-a_i|^2}\varphi^2\,d\mu
\\&+
\frac{\beta^2}{2}\int_{\R^N}\sum_{\substack{i,j=1\\ i\ne j}}^{n}\frac{|a_i-a_j|^2}{|x-a_i|^2|x-a_j|^2}\varphi^2 \, d\mu
\int_{\R^N}|\nabla\varphi|^2 \, d\mu+C_\mu\int_{\R^N}\varphi^2 \, d\mu.
\end{split}
\end{equation}
The maximum value of the first constant on left-hand side in (\ref{ineq beta}) is
$c_{N,n,\mu}=\frac{(N+K_\mu-2)^2}{4n}$ attained for $\beta=\frac{N+K_\mu-2}{2 n}$.
So we get the inequality 

\begin{equation}
\begin{split}
c_{N,n,\mu}\int_{\R^N}&\sum_{i=1}^{n}\frac{\varphi^2}{|x-a_i|^2}\,d\mu+
\frac{c_{N,n,\mu}}{2n}\int_{\R^N}\sum_{\substack{i,j=1\\ i\ne j}}^{n}\frac{|a_i-a_j|^2}{|x-a_i|^2|x-a_j|^2}\varphi^2 \, d\mu
\\&\le
 \int_{\R^N}|\nabla\varphi|^2 \, d\mu+C_\mu\int_{\R^N}\varphi^2 \, d\mu
\end{split}
\end{equation}
(cf. \cite{CPT2}). 

\end{rmk}

\bigskip\bigskip

\section{Optimality}

\bigskip

In this Section we prove the optimality of the constant on the left-hand side in (\ref{ineq 1}). 
(cf. \cite{CazacuZuazua} in the case of Lebesgue measure). 
To this aim we need further condition 
on the weight $\mu$. In particular we suppose that

\begin{itemize}
\item[$H_4)$] 
\begin{itemize}
\item[$i)$] $\quad$ 
for any $i\in \{1,\dots,n\}$, $n\ge 2$, it holds
$$
 \frac{\mu(x)}
{|x-a_i|^{\frac{2}{n}(N+k_\mu-2)+2}}\in L_{loc}^1(\R^N);
$$
\item[$ii)$] $\quad$
there exists a constant $C$ such that, for $|x|>\max_i |a_i|$, 
$$
\mu(x)\le \frac{C}{|x|^\gamma},\qquad \gamma> -(N+2 k_\mu-2), 
\quad 
$$
\end{itemize}
\end{itemize}
We observe that under hypothesis $i)$ in $H_4)$ we get 
$f=\prod_{i=1}^n|x-a_i|^{-\frac{N+K_\mu-2}{n}}\in H^1_{\mu, loc}$.

\begin{thm}\label{Thm optimality}
Let us assume hypothesis $H_4)$. Then the inequality (\ref{ineq 1}) does not hold for any $\varphi \in H^1_\mu$  if $c_{N, \mu}=\frac{(N+K_\mu-2)^2}{n^2}$ is replaced by any 
$c>c_{N, \mu}$.
\end{thm}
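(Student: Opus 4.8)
The plan is to deduce the failure of optimality from the integral identity (\ref{identity 2}) rather than re-estimating the three integrals by hand. Suppose, for contradiction, that (\ref{ineq 1}) were to hold with some $c > c_{N,\mu}$ for every $\varphi \in H^1_\mu$. Inserting such a $\varphi$ into (\ref{identity 2}) and cancelling the common term $c_{N,\mu}\int_{\R^N} V\varphi^2\,d\mu$ would give
\begin{equation*}
(c - c_{N,\mu})\int_{\R^N} V\varphi^2\,d\mu \le \int_{\R^N}\left|\nabla\frac{\varphi}{f}\right|^2 f^2\,d\mu,
\end{equation*}
where $f = \prod_{i=1}^n |x-a_i|^{-\beta}$ and $\beta = \frac{N+K_\mu-2}{n}$. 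Since $c - c_{N,\mu} > 0$ is fixed, it suffices to produce a sequence $(\varphi_\varepsilon) \subset H^1_\mu$ for which $\int_{\R^N} V\varphi_\varepsilon^2\,d\mu$ stays bounded below by a positive constant while the right-hand side tends to $0$; this contradicts the displayed inequality and proves the theorem.

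Next I would build the sequence as a truncation of the formal extremal $f$. Put $\varphi_\varepsilon = f\,\thetae$, where $\thetae$ is a smooth cut-off equal to $1$ on the region $\{|x-a_i|>2\varepsilon \ \forall i\}\cap\{|x|<\tfrac{1}{2\varepsilon}\}$, equal to $0$ on each ball $B(a_i,\varepsilon)$ and on $\{|x|>\tfrac1\varepsilon\}$, with $|\nabla\thetae|\lesssim \varepsilon^{-1}$ near the poles and $|\nabla\thetae|\lesssim \varepsilon$ in the far field. Because $\thetae$ vanishes near every pole and near infinity, $\varphi_\varepsilon \in C_c^\infty(\R^N\setminus\{a_1,\dots,a_n\}) \subset H^1_\mu$, so identity (\ref{identity 2}) applies to it directly and all its terms are finite. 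Moreover $\varphi_\varepsilon/f = \thetae$, so the remainder term reduces to $\int_{\R^N}|\nabla\thetae|^2 f^2\,d\mu$, supported only in the transition layers near the poles and near infinity.

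The estimates then split accordingly. The denominator is harmless: for $\varepsilon$ small $\thetae \equiv 1$ on a fixed region where $V, f, \mu > 0$, hence $\int_{\R^N} V\varphi_\varepsilon^2\,d\mu$ is bounded below by a positive constant independent of $\varepsilon$. For the numerator, the contribution of the layers around each pole is controlled, after using $f^2 \sim |x-a_i|^{-2\beta}$ and $|\nabla\thetae|^2 \lesssim \varepsilon^{-2}\lesssim |x-a_i|^{-2}$, by $\sum_i \int_{B(a_i,2\varepsilon)} |x-a_i|^{-2\beta-2}\mu\,dx$, which tends to $0$ as $\varepsilon\to 0$ by absolute continuity of the integral, since $H_4)$ $i)$ asserts exactly that $\mu\,|x-a_i|^{-(\frac2n(N+K_\mu-2)+2)} = \mu\,|x-a_i|^{-2\beta-2} \in L^1_{loc}$. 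The far-field layer contributes $\lesssim \varepsilon^{2}\int_{\frac1{2\varepsilon}<|x|<\frac1\varepsilon} f^2\,d\mu$, and inserting $f^2\sim|x|^{-2(N+K_\mu-2)}$ together with the bound $\mu\le C|x|^{-\gamma}$ from $H_4)$ $ii)$ yields a bound of order $\varepsilon^{N+2K_\mu+\gamma-2}$, which tends to $0$ precisely because $H_4)$ $ii)$ requires $\gamma > -(N+2K_\mu-2)$. Thus $\int_{\R^N}|\nabla\thetae|^2 f^2\,d\mu \to 0$, the ratio tends to $0$, and the contradiction follows.

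The main obstacle is exactly the vanishing of this truncation cost $\int_{\R^N}|\nabla\thetae|^2 f^2\,d\mu$: one must simultaneously control the transition layers at every pole and at infinity, and both estimates sit at the borderline where the two parts of hypothesis $H_4)$ are sharp — $i)$ is calibrated to the singularity exponent $2\beta+2$ of $f$ at the poles and $ii)$ to the competition between the decay of $f^2$ and the admissible growth of the weight $\mu$ at infinity. Verifying that the standard cut-off already drives this cost to zero (so that no logarithmic refinement is needed, in contrast to the Lebesgue-measure sequence of \cite{CazacuZuazua}) is the delicate point; the remaining bookkeeping — smoothness of $\varphi_\varepsilon$, finiteness of the terms in (\ref{identity 2}), and the positive lower bound for the denominator — is routine.
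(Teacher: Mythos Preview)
Your argument is correct and follows the same overall strategy as the paper --- reduce via the identity (\ref{identity 2}) to showing that the remainder $\int_{\R^N}|\nabla(\varphi_\varepsilon/f)|^2 f^2\,d\mu$ can be made to vanish --- but your construction of the test sequence is genuinely different. The paper takes $\varphi_\varepsilon=\theta_\varepsilon f$ with $\theta_\varepsilon$ cutting off \emph{only at infinity} (equal to $1$ on $|x|<R/\varepsilon$, vanishing for $|x|>2R/\varepsilon$); hypothesis $H_4)\,i)$ is invoked solely to ensure $f\in H^1_{\mu,loc}$ and hence $\varphi_\varepsilon\in H^1_\mu$, and the only transition layer to estimate is the far-field annulus, handled exactly as you do via $H_4)\,ii)$. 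You instead cut off near each pole as well, which forces you to estimate an additional near-pole layer, and you use $H_4)\,i)$ in a different way --- as an $L^1_{loc}$ bound on $|x-a_i|^{-2\beta-2}\mu$ that makes the pole contribution vanish by absolute continuity.

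Both routes are valid. Yours buys a cleaner functional-analytic setup: your $\varphi_\varepsilon$ lies in $C_c^\infty(\R^N)$, so identity (\ref{identity 2}) applies to it verbatim without appealing to any density extension, and you make explicit the lower bound on $\int V\varphi_\varepsilon^2\,d\mu$ that the paper leaves implicit. The paper's route is more economical --- only one annulus to estimate --- but it tacitly requires (\ref{identity 2}) to hold for the non-smooth function $\theta_\varepsilon f$, which needs a small density argument the paper does not spell out.
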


\begin{proof}

To state the optimality of the constant  $c_{N, \mu}$ and, then, the reverse inequality with respect to (\ref{ineq 1}) if $c>c_{N, \mu}$, it is sufficient there exists a sequence 
$\varphie\in H^1_\mu$ such that


\begin{equation}\label {lim fie}
\lim_{\varepsilon \rightarrow 0}
\Biggl[\int_{\R^N}|\nabla\varphie|^2 \, d\mu 
+\int_{\R^N}W\varphie^2 \, d\mu -
c_{N,\mu}\int_{\R^N} V \varphie^2\, d\mu\Biggr]=0.
\end{equation}
In order to prove (\ref{lim fie}), we use the integral inequality (\ref{identity 2}). So
we show that there exists a sequence 
$\varphie\in H^1_\mu$ such that
$$
\lim_{\varepsilon \rightarrow 0}
\int_{\R^N}\left|\nabla\frac{\varphie}{f}\right|^2 f^2\, d\mu=0.
$$
To this aim,
if $\displaystyle r_0\le\min_{\substack{1\le i,j\le n\\ i\ne j}} |a_i-a_j|/2$,
let $R$ such that $\bigcup_{i=1}^n B(a_i, r_0)\subset B(0,R)$ and
$\thetae\in C^\infty_c(\R^N)$, $\varepsilon\le 1$,  the following cut-off function 

$$
\thetae(x)= \left\{\begin{array}{ll}
1 \quad &\text{ if } |x|<\frac{R}{\varepsilon},\\
\in [0,1]\quad &\text{ if }  \frac{R}{\varepsilon}\le|x|\le\frac{2R}{\varepsilon},\\
0 \quad &\text{ if } |x|> \frac{2R}{\varepsilon},
\end{array}
\right. 
$$
Let us define the sequence $(\varphie)_{\varepsilon>0}$ as
$$
\varphie:= \thetae f
$$
which belongs to $\in H^1_\mu$.
We can choose $\varepsilon$ small enough so that 
\begin{equation}\label{choice e}
\varepsilon\le\min\left\{1,\frac{R}{c\max_i|a_i|}\right\},  
\qquad i\in\{1,\dots,n\}, \quad c>1.
\end{equation}
In $B_{\frac{2R}{\varepsilon}}(0)\setminus 
B_{\frac{R}{\varepsilon}}(0)$, away from the poles $a_i$, we get 
$|x|>c\max_i|a_i|$, so we have, using (\ref {choice e}),
$$
\frac{(c-1)}{c}\frac{R}{\varepsilon}\le |x|-|a_i|\le |x-a_i|\le |x|+|a_i|\le
\left(2+\frac{1}{c}\right)\frac{R}{\varepsilon}.
$$ 
Then, by condition $ii)$ in $H_4)$, we get
\begin{equation*}
\begin{split}
\int_{\R^N}\left|\nabla\frac{\varphie}{f}\right|^2 f^2\, d\mu&=
\int_{\R^N}\left|\nabla\thetae \right|^2
 \prod_{i=1}^n|x-a_i|^{-2\frac{N+K_\mu-2}{n}}\, d\mu
\\&=
\int_{B_{\frac{2R}{\varepsilon}}(0)\setminus 
B_{\frac{R}{\varepsilon}}(0)}\left|\nabla\thetae \right|^2
 \prod_{i=1}^n|x-a_i|^{-2\frac{N+K_\mu-2}{n}}\, d\mu
\\&\le
\frac{c_1}{R^2}\varepsilon^2
\left[\frac{c}{(c-1)R}\right]^{2(N+K_\mu-2)+\gamma}
{\varepsilon}^{2(N+K_\mu-2)+\gamma}
\int_{B_{\frac{2R}{\varepsilon}}(0)\setminus 
B_{\frac{R}{\varepsilon}(0)}}dx
\\&\le
c_2\,\omega_N\varepsilon^{2(N+K_\mu-1)+\gamma}
\int_{\frac{R}{\varepsilon}}^{\frac{2R}{\varepsilon}}\rho^{N-1}\,d\rho
\\&\le
c_3\,\omega_N{\varepsilon}^{(N+K_\mu-2)+K_\mu+\gamma}
\rightarrow 0 \quad\hbox{as}\quad \varepsilon \to 0,
\end{split}
\end{equation*}
where $\omega_N$ denotes the (N-1)-dimensional measure of the unit sphere in $\R^N$.

\end{proof}

\bigskip\bigskip

\end{document}